\def\endofClaim{\hfill\scalebox{.6}{$\Box$}}
\def\l{\ell}
\def\phi{\varphi}
\def\le{\leqslant}
\def\ge{\geqslant}
\def\RR{\mathbb{R}}
\newtheorem{theorem}{Theorem}
\newtheorem{lemma}[theorem]{Lemma}
\newtheorem{prop}[theorem]{Proposition}
\newtheorem{conj}[theorem]{Conjecture}
\theoremstyle{definition}
\theoremstyle{remark}
\newcommand{\oldqed}{}
\title{Equilateral sets in the $\l_1$ sum of Euclidean spaces}
\author{Aaron Lin\footnote{Department of Mathematics, London School of Economics and Political Science, London, United Kingdom.}}
\date{}
\begin{document}

\maketitle

\begin{abstract}
Let $E^n$ denote the (real) $n$-dimensional Euclidean space. It is not known whether an equilateral set in the $\l_1$ sum of $E^a$ and $E^b$, denoted here as $E^a \oplus_1 E^b$, has maximum size at least $\dim(E^a \oplus_1 E^b) + 1 = a + b + 1$ for all pairs of $a$ and $b$. We show, via some explicit constructions of equilateral sets, that this holds for all $a \le 27$, as well as some other instances.
\end{abstract}


\section{The Problem}\label{sec:problem}

An equilateral set in a normed space $(X, \lVert \cdot \rVert)$ is a subset $S \subset X$ such that given a fixed $\lambda > 0$, we have $\lVert x -y \rVert = \lambda$ for all distinct $x,y \in S$. Since norms respect scalar multiplication, the maximum size of an equilateral set in a normed space $X$ is well-defined, and we denote it by $e(X)$. When $\dim(X) = n$, we have the tight upper bound $e(X) \le 2^n$, proved in \cite{Petty} by Petty over 40 years ago. However, the following conjecture concerning a lower bound on $e(X)$, formulated also by Petty (amongst others), remains open for $n \ge 5$. (The $n=2$ case is easy; see \cite{Petty, Vaisala} and \cite{Makeev} for the $n = 3$ and $4$ cases respectively.)


\begin{conj}\label{conj:lower_bound}
Let $X$ be an $n$-dimensional normed space. Then $e(X) \ge n + 1$.
\end{conj}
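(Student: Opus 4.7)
My plan is to attack Conjecture~\ref{conj:lower_bound} in full generality via a deformation argument that interpolates between the two classes of normed spaces where $e(X) \ge n+1$ is already understood: norms of small Banach--Mazur distance to $E^n$ (handled by Brass and Dekster via perturbation of a regular simplex) and $\ell_1$-sum-type norms (treated constructively in the present paper). The broad idea is to start from $E^n$, where a regular simplex supplies an equilateral $(n+1)$-set, and to track such a set along a continuous path of norms ending at an arbitrary target.

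Concretely, I would parameterise centrally symmetric convex bodies in $\RR^n$, fix the Euclidean ball $B_0$, and choose a continuous path $(B_t)_{t \in [0,1]}$ to the unit ball $B_1$ of the target norm $X$. For each $t$, let $\mathcal{E}_t$ denote the configuration space of equilateral $(n+1)$-tuples in the norm $\lVert \cdot \rVert_{B_t}$, modulo translation and positive scaling. The plan is then: \emph{(i)} identify a robust topological invariant of $\mathcal{E}_0$ (for instance, the $O(n)$-orbit of regular simplices carries a natural homology/degree class); \emph{(ii)} show that the total family $\{(t, S) : S \in \mathcal{E}_t\}$ is a proper correspondence over $[0,1]$ after a generic perturbation of the path; and \emph{(iii)} conclude by homotopy invariance of the chosen invariant that $\mathcal{E}_1$ is nonempty. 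Along portions of the path lying in the near-Euclidean regime, nonemptiness of $\mathcal{E}_t$ is automatic, and along portions passing near $\ell_1$-sum norms the explicit constructions developed in this paper provide concrete witnesses to anchor the topological argument.

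The principal obstacle is degeneracy: along the deformation, two of the $n+1$ equilateral points may be forced to coincide, at which moment the correspondence in step~\emph{(ii)} fails to be proper and the topological invariant need not survive. Ruling out such degeneracies --- or routing the path around them --- uniformly over all target norms appears to require a genuinely new idea, and this is in my view exactly what separates the conjecture's known cases ($n \le 4$, near-Euclidean norms, and the $\ell_1$-sum results of this paper) from the full statement. A more modest, intermediate goal that the plan can plausibly achieve is to enlarge the class of norms known to satisfy $e(X) \ge n+1$ by showing that the perturbation neighbourhood of $E^n$ together with the $\ell_1$-sum neighbourhoods established here cover a larger portion of the space of norms in the Banach--Mazur metric; a complete resolution of Conjecture~\ref{conj:lower_bound} for $n \ge 5$, however, would require a genuinely new tool to control the degenerate strata, and that step is the bottleneck I would expect to spend most of my effort on.
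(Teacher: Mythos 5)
You were asked about Conjecture~\ref{conj:lower_bound}, and the first thing to say is that the paper does not prove this statement and does not claim to: it is Petty's conjecture, explicitly stated as open for $n \ge 5$, and the paper's contribution is only to verify it for the special spaces $E^a \oplus_1 E^b$ (Propositions~\ref{prop:a=1} and~\ref{prop:a=b}, Theorem~\ref{thm}, Lemma~\ref{lem}) by exhibiting explicit equilateral sets. So there is no proof in the paper against which to compare yours; the only question is whether your proposal proves the conjecture on its own. It does not, and to your credit you say so yourself --- but the gap deserves to be located more precisely than your sketch locates it.

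First, the degeneracy you single out (two of the $n+1$ points colliding) is not the real obstruction: normalise the common distance to $1$ instead of quotienting by scaling, and the points of any equilateral $(n+1)$-tuple are pairwise at distance exactly $1$, so they can never coincide. Moreover, properness in your step \emph{(ii)} is essentially free: the total space $\{(t,S) : S \in \mathcal{E}_t\}$ is closed and bounded (the unit balls $B_t$ along a continuous path are uniformly comparable), hence compact over $[0,1]$, and upper semicontinuity already yields that $\{t : \mathcal{E}_t \ne \emptyset\}$ is \emph{closed}. The entire difficulty is the opposite direction --- openness, i.e.\ why an equilateral set cannot silently disappear as $t$ increases. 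The perturbation argument you attribute to Brass and Dekster works only because at $t=0$ the regular simplex is a nondegenerate solution of the system of $\binom{n+1}{2}$ distance equations, so a local degree is defined and nonzero; along an arbitrary path that Jacobian can become singular, the local degree can drop to zero, and no invariant is known that survives such degenerations. Supplying one is exactly equivalent to proving the conjecture, so your step \emph{(iii)} assumes what is to be shown. Even your fallback goal (covering more of the Banach--Mazur compactum by combining the near-Euclidean neighbourhood with neighbourhoods of the $\ell_1$-sum norms) is not carried out: it would require a quantitative stability version of Theorem~\ref{thm}, e.g.\ measuring how much slack inequality~(\ref{eqn:hard_cond_nice}) leaves and how far the construction tolerates perturbation of the norm, none of which appears in the sketch. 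As it stands, this is a research programme with an honest account of why it does not close, not a proof.
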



We wish to verify this conjecture for the Cartesian product $\RR^a \times \RR^b$, equipped with the norm $\lVert \cdot \rVert$ given by
\begin{equation*}
\lVert (x, y) \rVert = \lVert x \rVert_2 + \lVert y \rVert_2,
\end{equation*}
where $x \in \RR^a$, $y \in \RR^b$, and $\lVert \cdot \rVert_2$ denotes the Euclidean norm. We denote this space by $E^a \oplus_1 E^b$, and refer to it as the $\l_1$ sum of the Euclidean spaces $E^a$ and $E^b$. This was considered originally by Roman Karasev of the Moscow Institute of Physics and Technology, as a possible counterexample to Conjecture \ref{conj:lower_bound}. See \cite[Section 3]{Swanepoel} for more background on equilateral sets.


\section{The Results}\label{sec:result}

Observe that we need only construct $a + b + 1$ points in $E^a \oplus_1 E^b$ which form an equilateral set to show that $e(E^a \oplus_1 E^b) \ge \dim(E^a \oplus_1 E^b) + 1 = a + b + 1$. We will work with these points in the form $(x_i, y_i) \in \RR^a \times \RR^b$, since we can then examine the $x_i$'s and $y_i$'s separately when necessary. By abuse of notation, we will denote the origin of any Euclidean space by $o$.

Let $d_n$ denote the circumradius of a regular $n$-simplex ($n \ge 1$) with unit side length. Note that
\begin{equation*}
d_n = \left( \sqrt{2+\frac{2}{n}} \right)^{-1}
\end{equation*}
is a strictly increasing function of $n$, and we have $1/2 \le d_n < 1/\sqrt{2}$.


The $a = 1$ case is easy.

\begin{prop}\label{prop:a=1}
$e(E^1 \oplus_1 E^b) \ge b + 2$.
\end{prop}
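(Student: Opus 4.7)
The plan is to exhibit an explicit equilateral set of size $b+2$ in $E^1 \oplus_1 E^b$ by combining a regular simplex in the ``$y$-hyperplane'' with a single extra point on the ``$x$-axis''. Since the norm decouples as $\lVert(x,y)\rVert = |x| + \lVert y \rVert_2$, choosing one coordinate to be zero for several points lets us reduce the distance computation to the Euclidean distance in the other coordinate.

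Concretely, I would let $v_1, \dots, v_{b+1} \in \RR^b$ be the vertices of a regular $b$-simplex with unit side length and circumcenter at $o$, and consider the candidate set
\begin{equation*}
S = \{(0, v_1), \dots, (0, v_{b+1}), (t, o)\} \subset \RR \times \RR^b,
\end{equation*}
where $t > 0$ is to be determined. Pairs from within the simplex satisfy $\lVert (0,v_i) - (0,v_j) \rVert = 0 + \lVert v_i - v_j \rVert_2 = 1$, while pairs involving the extra point satisfy $\lVert (t,o) - (0,v_i) \rVert = t + \lVert v_i \rVert_2 = t + d_b$. Setting $t = 1 - d_b$ makes every pairwise distance equal to $1$.

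The only thing to check is that this choice is legitimate, i.e.\ that $t > 0$. This is immediate from the bound $d_b < 1/\sqrt{2} < 1$ recorded just before the proposition. So there is no real obstacle: the construction is unobstructed because the circumradius of a unit-edge simplex is bounded strictly away from $1$, which leaves ``room'' along the $\RR$-axis for one extra equidistant point. (Aside: one cannot push this to $b+3$ points by adding $(-t, o)$, since that would force $2t = 1$, i.e.\ $d_b = 1/2$, which only holds when $b = 1$.)
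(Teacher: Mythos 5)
Your construction is exactly the one in the paper: the unit-edge regular $b$-simplex centred at the origin in the $E^b$ factor, plus the point $(1-d_b, o)$, with the same verification that all distances equal $1$ and that $1 - d_b > 0$. Correct, and essentially identical to the paper's proof (you just spell out the distance computation more explicitly).
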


\begin{proof}
Let $y_1, \dotsc, y_{b+1}$ be the vertices of a regular $b$-simplex with unit side length centred on the origin. Then the points $(o, y_1), \dotsc, (o, y_{b+1}), (1-d_b, o)$ are pairwise equidistant. 
\end{proof}


We next deal with the case where $b = a$.

\begin{prop}\label{prop:a=b}
$e(E^a \oplus_1 E^a) \ge 2a + 1$.
\end{prop}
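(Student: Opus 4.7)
\bigskip

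The plan is to construct $2a+1$ equilateral points in $E^a \oplus_1 E^a$ at common distance $\lambda = 2$, using the template of four ``axis'' points plus $2a - 3$ ``middle'' points. Fix a unit vector $e \in E^a$ and let $W := e^\perp \subset E^a$ be its $(a-1)$-dimensional orthogonal complement.

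The axis points are $(\pm e, o)$ and $(o, \pm e)$, and I would check first that these are pairwise at distance $2$: within a single factor the distance is $\|e - (-e)\|_2 = 2$, while across factors it is $\|e\|_2 + \|e\|_2 = 2$. Each middle point will have the form $(x_k, y_k)$ with $x_k, y_k \in W$. Imposing the distance-$2$ conditions $\|(\pm e, o) - (x_k, y_k)\| = 2$ forces $x_k \perp e$ (by subtracting the two equations) and $\sqrt{1+\|x_k\|_2^2} + \|y_k\|_2 = 2$; the symmetric system from the second factor then gives $y_k \perp e$ and $\sqrt{1+\|y_k\|_2^2} + \|x_k\|_2 = 2$. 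A short manipulation (subtracting the two equations) rules out $\|x_k\|_2 \neq \|y_k\|_2$, forcing $\|x_k\|_2 = \|y_k\|_2 = 3/4$. Conversely, any such $(x_k, y_k)$ satisfies $\|(e,o) - (x_k,y_k)\| = \sqrt{1+9/16} + 3/4 = 5/4 + 3/4 = 2$, and likewise for the other three axis points, so axis--middle distances are automatic.

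The substantive step is to place $2a-3$ middle points on the ``torus'' $\{(x,y) \in W \times W : \|x\|_2 = \|y\|_2 = 3/4\}$ so that $\|x_i - x_j\|_2 + \|y_i - y_j\|_2 = 2$ for all $i \neq j$. The cleanest case is the symmetric choice $x_k = y_k = (3/4) u_k$ with $u_k$ a unit vector in $W$; the constraint reduces to $\|u_i - u_j\|_2 = 4/3$, i.e.\ the $u_k$ form an equilateral set of side $4/3$ on the unit sphere of $W \cong E^{a-1}$. Since $(4/3) d_{2a-4} < 1$ always holds, such a regular $(2a-4)$-simplex can be inscribed in the unit sphere of $W$ precisely when the simplex's affine dimension $2a-4$ is at most $a-1$, that is, for $a \le 3$. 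This yields the desired construction in those cases (and handles $a=2$ trivially, since there is only one middle point).

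The main obstacle is handling $a \ge 4$, where the diagonal choice $x_k = y_k$ no longer fits and one must exploit the full flexibility of choosing $x_k \ne y_k$. The right heuristic is dimension counting: the configuration space $(S^{a-2}_{3/4})^{2(2a-3)}$ has dimension $2(2a-3)(a-2)$, while the middle--middle constraints number only $\binom{2a-3}{2} = (2a-3)(a-2)$, so there is ample slack. I would construct the middle points explicitly by breaking the symmetry of the diagonal: e.g.\ partition them so that several pairs are antipodal on one factor's sphere (contributing $\|x_i - x_j\|_2 = 3/2$, hence $\|y_i - y_j\|_2 = 1/2$), while the remaining points form smaller regular sub-simplices in each factor whose side lengths are tuned so every pairwise sum equals $2$. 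Verifying that such asymmetric configurations exist for every $a$ — say by a combined antipodal-plus-simplex arrangement in $W$ or via an inductive reduction that re-uses the middle points of the $(a-1)$-case after a suitable rescaling and two extra points — is the technical heart of the proof.
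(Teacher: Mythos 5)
Your reduction of the axis--middle conditions is correct and nicely done: the four points $(\pm e,o),(o,\pm e)$ at distance $2$ do force every further point $(x_k,y_k)$ with $x_k,y_k\in e^\perp$ to satisfy $\lVert x_k\rVert_2=\lVert y_k\rVert_2=3/4$. But the construction is not completed, and the one case you do claim to finish contains an error. To inscribe a regular $(2a-4)$-simplex of side $4/3$ (circumradius $\tfrac43 d_{2a-4}<1$) in the \emph{unit} sphere of $W\cong E^{a-1}$, the circumcenter must be a nonzero vector orthogonal to the simplex's affine hull, so you need $2a-4\le a-2$, not $2a-4\le a-1$; the diagonal choice therefore works only for $a=2$. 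Concretely, for $a=3$ you would need three unit vectors in $E^2$ pairwise at distance $4/3$, but three pairwise-equidistant points on the unit circle necessarily have side $\sqrt3$. So already at $a=3$ your argument proves nothing.

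For $a\ge 3$ what remains is to place $2a-3$ points on the product of two radius-$3/4$ spheres in $E^{a-1}\times E^{a-1}$ with $\lVert x_i-x_j\rVert_2+\lVert y_i-y_j\rVert_2=2$ for all $i\ne j$, and this --- which you yourself call the technical heart --- is only gestured at. The dimension count comparing $2(2a-3)(a-2)$ parameters to $\binom{2a-3}{2}$ constraints is a heuristic, not an existence proof: these constraints are far from generic (you have already seen them force exact norm values), and no actual configuration is exhibited. The ``antipodal pairs plus tuned sub-simplices'' and ``inductive rescaling'' plans are not carried out, so the proposition is unproved for every $a\ge 3$. For comparison, the paper avoids this bottleneck entirely by using only \emph{one} exceptional point rather than four: it takes the $2a$ points $(v_i,\pm\tfrac12 e_i)$, where the $v_i$ form a regular $(a-1)$-simplex of side $1-1/\sqrt2$ in a hyperplane of the first factor and the $\pm\tfrac12 e_i$ form a cross-polytope in the second, so that every pairwise distance is $(1-1/\sqrt2)+1/\sqrt2$ or $0+1$, and then appends a single point $(x,o)$ on the line orthogonal to that hyperplane with $\lVert x-v_i\rVert_2=\tfrac12$, which exists because $\tfrac12$ exceeds the simplex's circumradius. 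If you want to salvage your framework, that pairing of a simplex in one factor with a cross-polytope in the other is the idea you are missing.
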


\begin{proof}
We first describe an equilateral set of size $2a$ in $E^a \oplus_1 E^a$: 
consider the set of points $\{ (v_i, \frac{1}{2}e_i) : i = 1, \dotsc, a\} \cup \{ (v_i, -\frac{1}{2}e_i) : i = 1, \dotsc, a\}$, where $v_1, \dotsc, v_a$ are the vertices of a regular simplex of codimension one, centred on the origin with side length $1 - 1/\sqrt{2}$, and $e_1, \dotsc, e_a$ are the standard basis vectors. Note that the $2a$ vectors $\pm \frac{1}{2} e_i$ for $i = 1, \dotsc, a$ form a cross-polytope in $E^a$, centred on the origin. 



We now want to add a point of the form $(x, o)$ to the above set, a unit distance away from every other point. 
Note that we must have $\lVert x - v_i \rVert_2 = 1/2$ for $i = 1, \dotsc, a$, and
$x$ must lie on the one-dimensional subspace
orthogonal to the $(a-1)$-dimensional subspace spanned by the $v_i$'s. 
This is realisable if $\lVert x - v_i \rVert_2 \ge (1 - 1/\sqrt{2})d_{a-1}$ (note that the $(a-1)$-simplex formed by the $v_i$'s has side length $1 - 1/\sqrt{2}$),
in which case we have an equilateral set of size $2a + 1$ in $E^a \oplus_1 E^a$. 
But we have
\begin{equation*}
\frac{1}{2} > \frac{1}{\sqrt{2}} \left( 1 - \frac{1}{\sqrt{2}} \right) > \left( 1 - \frac{1}{\sqrt{2}} \right)d_{a-1} 
\end{equation*}
for all $a \ge 2$.
\end{proof}


In the remaining case and our main result, we have $b > a \ge 2$, and we find sufficient conditions for an equilateral set of size $a + b + 1$ to exist in $E^a \oplus_1 E^b$. 

\begin{theorem}\label{thm}
Let $b > a \ge 2$. Let $c = \lfloor 1 + b/(a + 1) \rfloor$, $\beta = b \pmod{a+1} \in \{0, \dotsc, a\}$, and $\alpha = a + 1 - \beta$. If $\beta = 0$, $1$, or $a$, or 
\begin{equation}\label{eqn:hard_cond_ugly}
\frac{\alpha - 1}{2\alpha} \left( 1 - \sqrt{\frac{c-1}{c}} \right)^2 + \frac{\beta - 1}{2\beta} \left( 1 - \sqrt{\frac{c}{c+1}} \right)^2 \le
\left( 1 - \sqrt{\frac{1}{2} \left( \frac{c-1}{c} + \frac{c}{c+1} \right)} \right)^2
\end{equation}
holds, then $e(E^a \oplus_1 E^b) \ge a + b + 1$.
\end{theorem}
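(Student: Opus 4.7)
The plan is to place $a+b+1$ points in $E^a \oplus_1 E^b$ in the form $(x_i, y_i)$, partitioned into $a+1$ groups. Since $b = (c-1)(a+1) + \beta$ with $\alpha + \beta = a+1$, I will use $\alpha$ groups of size $c$ and $\beta$ groups of size $c+1$, totalling $(a+1)c + \beta = a+b+1$ points. All points in a group $k$ will share a common $x$-coordinate $v_k \in E^a$, and their $y$-coordinates will form the vertices of a regular unit simplex centred at the origin, lying in a subspace $V_k \subset E^b$. The subspaces $V_k$ are chosen mutually orthogonal; since $\dim V_k = n_k - 1$ where $n_k \in \{c, c+1\}$, their total dimension is $\alpha(c-1) + \beta c = b$, so they span $E^b$.

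With this setup, within-group distances are automatically $1$ (the $x$-coordinates coincide and the $y$-coordinates are unit-side simplex vertices). For two points in distinct groups $k \ne k'$, the $y$-coordinates lie in orthogonal subspaces and have norms equal to the circumradii $d_{n_k-1}, d_{n_{k'}-1}$, so $\lVert y_i - y_j \rVert_2 = \sqrt{d_{n_k-1}^2 + d_{n_{k'}-1}^2}$. Using $d_n^2 = n/(2(n+1))$, the unit-distance condition reduces to finding $v_1, \dotsc, v_{a+1} \in E^a$ whose pairwise distances take three prescribed values: $r_1 := 1 - \sqrt{(c-1)/c}$ between pairs of size-$c$ vertices, $r_3 := 1 - \sqrt{c/(c+1)}$ between pairs of size-$(c+1)$ vertices, and $r_2 := 1 - \sqrt{\tfrac{1}{2}((c-1)/c + c/(c+1))}$ for mixed pairs.

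To realise such a configuration in $E^a$, I would take the $\alpha$ size-$c$ vertices to form a regular $(\alpha-1)$-simplex of side $r_1$ in a subspace $W_1$, the $\beta$ size-$(c+1)$ vertices to form a regular $(\beta-1)$-simplex of side $r_3$ in a subspace $W_2 \perp W_1$, and then separate the centres of the two simplices by a distance $h$ along a unit vector orthogonal to $W_1 \oplus W_2$. Since $(\alpha-1) + (\beta-1) + 1 = a$, this fits exactly in $E^a$. The mixed cross-distance evaluates to $\sqrt{(d_{\alpha-1} r_1)^2 + (d_{\beta-1} r_3)^2 + h^2}$, which equals $r_2$ precisely when $h^2 = r_2^2 - (d_{\alpha-1} r_1)^2 - (d_{\beta-1} r_3)^2$. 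Since $d_{\alpha-1}^2 = (\alpha-1)/(2\alpha)$ and $d_{\beta-1}^2 = (\beta-1)/(2\beta)$, such a real $h$ exists iff the inequality \eqref{eqn:hard_cond_ugly} holds.

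The exceptional values $\beta \in \{0, 1, a\}$ correspond to degenerate versions of this construction: when $\beta = 0$ the size-$(c+1)$ groups are absent entirely and $v_1, \dotsc, v_{a+1}$ just form a regular $a$-simplex of side $r_1$, which always exists; when $\beta = 1$ or $\beta = a$, one of the two simplices in $E^a$ collapses to a single point, so only one of the two terms on the left of \eqref{eqn:hard_cond_ugly} survives. The expected main obstacle is not the construction itself but the elementary algebraic check that this surviving one-term inequality always holds in the two degenerate cases (via the closed form $d_n = \sqrt{n/(2(n+1))}$ and the bounds $1/2 \le d_n < 1/\sqrt{2}$), confirming that the three listed values of $\beta$ are genuinely exceptional rather than requiring \eqref{eqn:hard_cond_ugly} as a hypothesis.
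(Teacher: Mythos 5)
Your construction is exactly the paper's: the same decomposition of $E^b$ into $\alpha$ orthogonal $(c-1)$-dimensional and $\beta$ orthogonal $c$-dimensional subspaces carrying unit regular simplices, the same three target distances $f(c-1)$, $f(c)$, $g(c)$ in $E^a$, and the same realisation via two orthogonal regular simplices offset along an extra coordinate, with \eqref{eqn:hard_cond_ugly} arising as the nonnegativity condition on the offset; the degenerate cases $\beta \in \{0,1,a\}$ are handled the same way (the paper likewise reduces $\beta=1$ to checking $f(c-1)^2 < 2g(c)^2$ and leaves that as an easy verification). The proposal is correct and takes essentially the same approach as the paper.
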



Note that if inequality (\ref{eqn:hard_cond_ugly}) is satisfied by all pairs of $a$ and $b$ with $b > a \ge 2$ and $b \ne 0$, $1$, or $a \pmod{a+1}$, then Proposition \ref{prop:a=1}, Proposition \ref{prop:a=b}, and Theorem \ref{thm} cover all possible cases, as $E^a \oplus_1 E^b$ is isometrically isomorphic to $E^b \oplus_1 E^a$. Unfortunately, this is not true, and we explore its limitations after the proof of Theorem \ref{thm}.


\begin{proof}[Proof of Theorem \ref{thm}]
We are going to describe an equilateral set of size $a + b + 1$ with unit distances between points.
Noting that $\alpha \cdot (c-1) + \beta \cdot c = b$, consider the following decomposition of $E^b$ into pairwise orthogonal subspaces:
\begin{equation*}
E^b = U_1 \oplus \dotsb U_\alpha \oplus V_1 \oplus \dotsb \oplus V_\beta,
\end{equation*}
where $\dim U_i = c - 1$ for $i = 1, \dotsc, \alpha$ and $\dim V_j = c$ for $j = 1, \dotsc, \beta$.
Let $u_1^{(i)}, \dotsc, u_c^{(i)}$ be the vertices of a regular $(c-1)$-simplex with unit side length centred on the origin in $U_i$, and let $v_1^{(j)}, \dotsc, v_{c+1}^{(j)}$ be the vertices of a regular $c$-simplex with unit side length centred on the origin in $V_j$.

The $a + b + 1$ points of our equilateral set will be
\begin{equation*}
\left \{ \left(w_i, u_k^{(i)} \right) : 1 \le i \le \alpha, 1 \le k \le c \right \} \cup \left \{ \left(z_j, v_\ell^{(j)} \right) : 1 \le j \le \beta, 1 \le \ell \le c + 1 \right \}.
\end{equation*}
Note here that $\alpha \cdot c + \beta \cdot (c+1) = a + b + 1$, and we have $\lVert u_k^{(i)} - u_{k'}^{(i)} \rVert_2 = \lVert v_\ell^{(j)} - u_{\ell'}^{(j)} \rVert_2 = 1$ for $k \ne k'$ and $\ell \ne \ell'$.
All that remains is then to calculate how far apart the $w_i$'s and $z_j$'s should be in $E^a$, and see if such a configuration is realisable.



We only have three non-trivial distances to calculate:
\begin{itemize}
\item the distance between $\left(z_j, v_\ell^{(j)} \right)$ and $\left(z_{j'}, v_{\ell'}^{(j')} \right)$ for $j \ne j'$ should be one, and so
\begin{equation*}
\lVert z_j - z_{j'} \rVert_2 = 1 - \sqrt{d_{c}^2 + d_{c}^2} =  1 - \sqrt{\frac{c}{c+1}} =: f(c),
\end{equation*}
\item the distance between $\left(w_i, u_k^{(i)} \right)$ and $\left(w_{i'}, u_{k'}^{(i')} \right)$ for $i \ne i'$ should be one, and so
\begin{equation*}
\lVert w_i - w_{i'} \rVert_2 = 1 - \sqrt{d_{c-1}^2 + d_{c-1}^2} =  1 - \sqrt{\frac{c-1}{c}} = f(c-1),
\end{equation*}
\item finally, the distance between $\left(w_i, u_k^{(i)} \right)$ and $\left(z_{j}, v_{\ell}^{(j)} \right)$ should also be one, and so
\begin{equation*}
\lVert w_i - z_{j} \rVert_2 = 1 - \sqrt{d_{c-1}^2 + d_c^2} = 1 - \sqrt{\frac{1}{2} \left( \frac{c-1}{c} + \frac{c}{c+1} \right)} =: g(c).
\end{equation*}
\end{itemize}
What we need in $E^a$ is thus a regular $(\alpha - 1)$-simplex with side length $f(c-1)$ and a regular $(\beta - 1)$-simplex with side length $f(c)$, with the distance between any point from one simplex and any point from the other being $g(c)$. Note that here we consider the $(-1)$-simplex to be empty. We now show that this configuration is realisable (in $E^a$) if the conditions in the statement of the theorem are satisfied.



We first consider the special cases $\beta = 0$ and $\beta = 1$ or $a$, and then the main case $2 \le \beta \le a-1$. 
It is trivial if $\beta = 0$: then $\alpha = a + 1$ and we only need to find a regular $a$-simplex with side length $f(c-1)$ in $E^a$.

If $\beta = 1$, in which case $\alpha = a$, consider the decomposition $E^a = E^{a-1} \oplus E^1$. Consider the points $(p_1, o), \dotsc, (p_a, o)$, where $p_1, \dotsc, p_a$ are the vertices of a regular $(a-1)$-simplex with side length $f(c-1)$, centred on the origin in $E^{a-1}$. We want to add a point $(o, \zeta)$ for some $\zeta \in E^1$ such that, for any $i = 1, \dotsc, a$, we have
\begin{equation*}
\lVert (p_i, o) - (o, \zeta) \rVert_2 = g(c),
\end{equation*}
or equivalently,
\begin{equation*}
d_{a-1}^2 f(c-1)^2 + \zeta^2 = g(c)^2.
\end{equation*}
Noting that $d_{a-1} < 1/\sqrt{2}$, it suffices to show, for all $c \ge 2$, that
\begin{equation*}
f(c-1)^2 < 2 g(c)^2.
\end{equation*}
But this is easily verifiable to be true, and so the desired $a$-simplex exists in $E^a$.
By symmetry and the fact that $f(c)^2 < f(c-1)^2$, the desired $a$-simplex also exists if $\beta = a$.

Now suppose $2 \le \beta \le a - 1$ so that $\alpha, \beta \ge 2$. 
Consider this time, the decomposition $E^a = E^{\alpha - 1} \oplus E^{\beta - 1} \oplus E^1$, noting that $\alpha + \beta = a + 1$. Suppose $p_1, \dotsc, p_\alpha$ are the vertices of a regular $(\alpha-1)$-simplex with side length $f(c-1)$, centred on the origin in $E^{\alpha-1}$, and $q_1, \dotsc, q_\beta$ are the vertices of a regular $(\beta-1)$-simplex with side length $f(c)$, centred on the origin in $E^{\beta-1}$. Consider then the set of points $\{ (p_i, o, o) : i = 1, \dotsc, \alpha \} \cup \{ (o, q_j, \zeta) : j = 1, \dotsc, \beta \}$, where $\zeta \in E^1$ is to be determined.
As before, we want a $\zeta$ such that for all $i$ and $j$, we have
\begin{equation*}
\lVert (p_i, o, o) - (o, q_j, \zeta) \rVert_2 = g(c),
\end{equation*}
or equivalently
\begin{equation}\label{eqn:hard_cond_nice}
\left( d_{\alpha-1} f(c-1) \right)^2 + \left( d_{\beta-1} f(c) \right)^2 \le g(c)^2.
\end{equation}
But this is exactly inequality (\ref{eqn:hard_cond_ugly}).
\end{proof}


As mentioned above, inequality (\ref{eqn:hard_cond_ugly}), and thus inequality (\ref{eqn:hard_cond_nice}), does not hold for all pairs of $a$ and $b$. However, we have the following result.

\begin{lemma}\label{lem}
If $b \ge a ^2 + a$, then inequality (\ref{eqn:hard_cond_nice}) holds.
\end{lemma}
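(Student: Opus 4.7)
The plan is to reduce (\ref{eqn:hard_cond_nice}) to an inequality depending only on $a$ and $c$. Since $d_n^2 = n/(2(n+1))$ is increasing in $n$ and $\alpha, \beta \le a$ (as $\alpha + \beta = a+1$ with both at least $1$), we have $d_{\alpha-1}^2, d_{\beta-1}^2 \le (a-1)/(2a)$. It therefore suffices to show
\begin{equation*}
(a-1)\left(f(c-1)^2 + f(c)^2\right) \le 2a \cdot g(c)^2.
\end{equation*}

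Setting $u = 1/c$, $v = 1/(c+1)$, $s = (u+v)/2$, and $F(x) = 1 - \sqrt{1-x}$, the identity $(1-\sqrt{1-x})^2 = 2 - x - 2\sqrt{1-x}$ gives $f(c-1)^2 = 2 - u - 2\sqrt{1-u}$, $f(c)^2 = 2 - v - 2\sqrt{1-v}$, and $g(c)^2 = 2 - s - 2\sqrt{1-s}$. Using $2s = u + v$ and collecting terms, a direct expansion shows the displayed inequality is equivalent to
\begin{equation*}
g(c)^2 \ge (a-1)\,\epsilon, \qquad \epsilon := F(u) + F(v) - 2F(s) = 2\sqrt{1-s} - \sqrt{1-u} - \sqrt{1-v}.
\end{equation*}
Note $\epsilon \ge 0$ since $F$ is convex.

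Next I would bound $\epsilon$ above and $g(c)$ below. With $r = (u-v)/2 = 1/(2c(c+1))$ and $F''(x) = 1/(4(1-x)^{3/2})$ increasing in $x$, a standard integration bound yields $\epsilon \le r^2 F''(u)$. Since $b \ge a^2 + a$ forces $c \ge a+1 \ge 3$, we have $u = 1/c \le 1/3$, giving $F''(u) \le F''(1/3) < 1/2$ and thus $\epsilon < 1/(8c^2(c+1)^2)$. For the lower bound, $g(c) = s/(1+\sqrt{1-s}) \ge s/2 \ge 1/(2(c+1))$, so $g(c)^2 \ge 1/(4(c+1)^2)$. The required $g(c)^2 \ge (a-1)\epsilon$ then reduces to $2c^2 \ge a-1$, which holds trivially since $c \ge a+1$.

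The main obstacle is the algebraic identity reducing the displayed inequality to the form $g(c)^2 \ge (a-1)\epsilon$; once this is established, the subsequent estimates follow easily from the convexity and monotonicity of $F$.
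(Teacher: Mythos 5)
Your proof is correct, but it takes a genuinely different route from the paper's. The paper first uses $f(c)\le f(c-1)$ to pull out a single factor of $f(c-1)^2$, then exploits the constraint $\alpha+\beta=a+1$ to get the sharper joint bound $d_{\alpha-1}^2+d_{\beta-1}^2\le (a-1)/(a+1)$, reducing the problem to $\frac{a-1}{a+1}<\bigl(g(c)/f(c-1)\bigr)^2$; it then asserts (without proof) that the right-hand side is increasing in $c$ and that the resulting inequality at $c=a$ is ``easily verifiable.'' You instead bound each $d^2$ individually by $(a-1)/(2a)$ (a cruder aggregate bound, $(a-1)/a$ versus $(a-1)/(a+1)$) but keep $f(c-1)$ and $f(c)$ separate, which pays off: your algebraic identity $f(c-1)^2+f(c)^2-2g(c)^2=2\epsilon$ with $\epsilon=F(u)+F(v)-2F(s)$ the midpoint convexity defect of $F(x)=1-\sqrt{1-x}$ is exactly right (I checked: $F(x)^2=2-x-2\sqrt{1-x}$ and the $x$-terms cancel since $2s=u+v$), and it converts the inequality into the transparent form $g(c)^2\ge(a-1)\epsilon$. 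Your subsequent estimates $\epsilon\le r^2F''(u)<1/(8c^2(c+1)^2)$ and $g(c)\ge s/2\ge 1/(2(c+1))$ all check out, as does the endgame $2c^2\ge a-1$. What your approach buys is a fully quantitative, self-contained argument with no unproved monotonicity claims; indeed it shows the hypothesis $b\ge a^2+a$ (i.e.\ $c\ge a+1$) is far stronger than needed for your chain of bounds, which only requires $c\ge\max(3,\sqrt{(a-1)/2})$. What the paper's approach buys is brevity and a slightly sharper constant from the joint bound on $d_{\alpha-1}^2+d_{\beta-1}^2$, at the cost of two ``easily verifiable'' steps left to the reader.
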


\begin{proof}
Since $f(n)$ is a decreasing function of $n$, inequality (\ref{eqn:hard_cond_nice}) holds if $a$ and $b$ satisfy
\begin{equation*}
\left( d_{\alpha-1}^2 + d_{\beta-1}^2 \right) f(c-1)^2 < g(c)^2.
\end{equation*}
Using the fact that $\alpha = a + 1 - \beta$ implies $d_{\alpha-1}^2 + d_{\beta-1}^2 \le (a-1)/(a+1)$, we therefore just need $a$ and $b$ to satisfy
\begin{equation*}
\frac{a-1}{a+1} < \left( \frac{g(c)}{f(c-1)} \right)^2.
\end{equation*}
But the latter expression is an increasing function of $c$, and so if $c \ge a$, or equivalently, when $b \ge a^2 + a$, we need only consider the inequality
\begin{equation*}
\frac{a-1}{a+1} < \left( \frac{g(a)}{f(a-1)} \right)^2,
\end{equation*}
which is then easily verifiable to be true. 
\end{proof}



It can be checked (by computer) that inequality (\ref{eqn:hard_cond_nice}) holds for all $a \le 27$, but does not hold for $a = 28$ and $b = 40$, $a = 29$ and $39 \le b \le 44$, and $a = 30$ and $40 \le b \le 47$. 
The spaces of smallest dimension where we could not find an equilateral set of size $a + b + 1$ are $E^{28} \oplus_1 E^{40}$ and $E^{29} \oplus_1 E^{39}$.



\subsection*{Acknowledgements}
The author would like to thank Konrad Swanepoel for introducing him to this problem, and for the numerous helpful suggestions in writing this up.

\vfill

\begin{bibdiv}
\begin{biblist}

\bib{Makeev}{article}{
      author={Makeev, V. V.},
       title={Equilateral simplices in a four-dimensional normed space},
        date={2005},
     journal={Zap. Nauchn. Sem. S.-Peterburg. Otdel. Mat. Inst. Steklov. (POMI) Geom. i Topol.},
      volume={329},
      number={9},
       pages={88\ndash 91, 197},
}

\bib{Petty}{article}{
      author={Petty, C. M.},
       title={Equilateral sets in {M}inkowski spaces},
        date={1971},
     journal={Proc. Amer. Math. Soc.},
      volume={29},
       pages={369\ndash 374},
}

\bib{Swanepoel}{unpublished}{
      author={Swanepoel, K.},
       title={Combinatorial distance geometry in normed spaces},
        note={arXiv:1702.00066},
}

\bib{Vaisala}{article}{
      author={V\"ais\"al\"a, J.},
       title={Regular simplices in three-dimensional normed spaces},
        date={2012},
     journal={Beitr. Algebra Geom.},
      volume={53},
      number={2},
       pages={569\ndash 570},
}

\end{biblist}
\end{bibdiv}


\end{document}